\def\Sum#1#2{\displaystyle \sum_{#1}^{#2}}
\def\DSum#1#2{\displaystyle \bigoplus_{#1}^{#2}}
\def\fracs#1#2{\displaystyle \frac{#1}{#2}}
\def\opn#1#2{\def#1{\operatorname{#2}}} 
\opn\chara{char} \opn\length{\ell} \opn\pd{pd} \opn\rk{rk}
\opn\projdim{proj\,dim} \opn\injdim{inj\,dim} \opn\rank{rank}
\opn\depth{depth} \opn\sdepth{sdepth} \opn\fdepth{fdepth}
\opn\grade{grade} \opn\height{height} \opn\embdim{emb\,dim}
\opn\codim{codim}  \opn\min{min} \opn\max{max}
\opn\Tr{Tr} \opn\bigrank{big\,rank}
\opn\superheight{superheight}\opn\lcm{lcm}
\opn\trdeg{tr\,deg}
\opn\reg{reg} \opn\lreg{lreg} \opn\ini{in} \opn\lpd{lpd}
\opn\size{size}
\opn\Spec{Spec} \opn\Supp{Supp} \opn\supp{supp} \opn\Sing{Sing}
\opn\Ass{Ass} \opn\Min{Min}
\opn\Ann{Ann} \opn\Rad{Rad} \opn\Soc{Soc}
\opn\Im{Im} \opn\Ker{Ker} \opn\Coker{Coker} \opn\Am{Am}
\opn\Hom{Hom} \opn\Tor{Tor} \opn\Ext{Ext} \opn\End{End}
\opn\Aut{Aut} \opn\id{id}  \opn\deg{deg}
\opn\nat{nat}
\opn\pff{pf}
\opn\Pf{Pf} \opn\GL{GL} \opn\SL{SL} \opn\mod{mod} \opn\ord{ord}
\opn\Gin{Gin} \opn\Hilb{Hilb}
\let\iso=\cong
\let\to=\rightarrow
\let\xto=\xrightarrow
\def\Implies{\ifmmode\Longrightarrow \else
        \unskip${}\Longrightarrow{}$\ignorespaces\fi}
\def\implies{\ifmmode\Rightarrow \else
        \unskip${}\Rightarrow{}$\ignorespaces\fi}
\def\iff{\ifmmode\Longleftrightarrow \else
        \unskip${}\Longleftrightarrow{}$\ignorespaces\fi}
\newtheorem{theorem}{Theorem}
\theoremstyle{plain}
\newtheorem{corollary}{Corollary}
\newtheorem{definition}{Definition}
\newtheorem{example}{Example}
\newtheorem{lemma}{Lemma}
\newtheorem{proposition}{Proposition}
\newtheorem{remark}{Remark}
\numberwithin{equation}{section}
\let\epsilon\varepsilon
\let\phi=\varphi
\let\kappa=\varkappa
\begin{document}

\title{Depth and Stanley Depth of the  Canonical Form of a factor of monomial ideals}
\author{Adrian Popescu}
\date{}
\pagestyle{myheadings}
\thanks{The  support from the Department of Mathematics of the University of Kaiserslautern is gratefully acknowledged.}

\address{Adrian Popescu, Department of Mathematics, University of Kaiserslautern, Erwin-Schr\"odinger-Str., 67663 Kaiserslautern, Germany}
\email{popescu@mathematik.uni-kl.de}

\maketitle

\begin{abstract}
We introduce a so called canonical form of a factor of two monomial ideals. The depth and the Stanley depth of such a factor is invariant under taking the canonical form. This can be seen using a result of Okazaki and Yanagawa [7]. In the case of depth we present in this paper a different proof. It follows easily that the Stanley Conjecture holds for the factor if and only if it holds for its canonical form. In particular, we construct an algorithm which simplifies the depth computation and using the canonical form we massively reduce the run time for the sdepth computation.
\end{abstract}

\thispagestyle{empty}

\section{Introduction}
\vskip 1cm

 Let $K$ be a field and $S=K[x_1,\ldots,x_n]$ be the polynomial ring over $K$
 in $n$ variables. A Stanley decomposition of a graded $S-$module $M$ is a finite family $$\mathcal{D} = (S_i, u_i)_{i \in I}$$ in which $u_i$ are homogeneous elements of $M$ and $S_i$ are graded $K-$algebra retract if $S$ for all $i  \in I$ such that $S_i \cap \Ann (u_i) = 0$ and $$M = \DSum{i \in I}{} S_iu_i$$ as a graded $K-$vector space.  The Stanley depth of $\mathcal{D}$, denoted by $\sdepth (\mathcal{D})$, is the depth of the $S-$module $\DSum{i \in I}{} S_iu_i$. The Stanley depth of $M$ is defined as   $$ \sdepth\ (M) :=\max\{\sdepth \ ({\mathcal D})\ |\  {\mathcal D}\; \text{is a
Stanley decomposition of}\;  I \}.$$

Another definition of sdepth using partitions is given in \cite{HVZ}.

Stanley's Conjecture \cite{S} states that the Stanley depth \\ $\sdepth (M)$ is $\geq \depth\ (M)$.

Let $J \subsetneq I \subset S$ be two monomial ideals in $S$. In \cite{IKF}, Ichim et. al.  studied the sdepth and depth of the factor $\nicefrac{I}{J}$ under polarization and  reduced the Stanley's Conjecture to the case when the ideals are monomial squarefree. This is possible the best result from the last years concerning Stanley's depth. It is worth to mention that this result is not very useful for computing sdepth since it introduces a lot of new variables.   In the squarefree case there are not many known results about the Stanley conjecture (see for example \cite{PP}).

Another result of \cite{IKF} which helps in the sdepth computation is the following proposition, which extends \cite[Lemma 1.1]{Ci}, \cite[Lemma 2.1]{IQ}.

\begin{proposition}\label{main} \cite[Proposition 5.1]{IKF} Let $k\in {\mathbb N}$ and $I''$, $J''$ be the monomial ideals obtained from $I$, $J$ in the following way: Each generator whose degree in $x_n$ is at least $k$ is multiplied by $x_n$ and all other generators are taken unchanged. Then
$\sdepth_S\nicefrac{I}{J} = \sdepth_S\nicefrac{I''}{J''}$.
\end{proposition}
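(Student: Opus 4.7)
My plan is to prove $\sdepth_S(I/J) = \sdepth_S(I''/J'')$ by establishing both inequalities through explicit translation of Stanley decompositions between the two quotients. The key tool is the monomial-level map $\psi$ defined by $\psi(u) = u$ if $\deg_{x_n}(u) < k$ and $\psi(u) = x_n u$ if $\deg_{x_n}(u) \geq k$. A direct case analysis on generators---unmodified ones (those with $\deg_{x_n} < k$) versus modified ones (those with $\deg_{x_n} \geq k$, multiplied by $x_n$)---shows that $\psi$ sends the monomial basis of $I \setminus J$ injectively into that of $I'' \setminus J''$, with image equal to the set of monomials of $I'' \setminus J''$ whose $x_n$-degree is different from $k$. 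The ``extra slice'' at $x_n$-degree exactly $k$ consists of the monomials divisible by some unmodified generator of $I$ but by no unmodified generator of $J$, and via the $x_n$-shift corresponds to the $x_n$-degree $k-1$ slice of $I/J$.

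Given a Stanley decomposition $I/J = \bigoplus_i u_i K[Z_i]$ of sdepth $d$, I would produce one of $I''/J''$ of the same sdepth by processing each component according to its position relative to the $x_n$-degree $k$ cutoff. Components on one side of the cutoff (either $x_n \notin Z_i$, or $\deg_{x_n}(u_i) \geq k$) translate directly via $\psi$ to a Stanley space with the same $Z_i$, preserving the contribution $|Z_i|$. For a straddling component, with $x_n \in Z_i$ and $\deg_{x_n}(u_i) < k$, one writes
\[
u_i K[Z_i] = \Bigl(\bigoplus_{j=0}^{k - \deg_{x_n}(u_i) - 1} u_i x_n^j K[Z_i \setminus \{x_n\}]\Bigr) \oplus u_i x_n^{k - \deg_{x_n}(u_i)} K[Z_i],
\]
translates the low slices to themselves in $I''/J''$ (since for $x_n$-degree $< k$ the two quotients have the same underlying monomials), and combines the high part of this splitting with the piece of the extra slice corresponding to this component into a Stanley space $u_i x_n^{k - \deg_{x_n}(u_i)} K[Z_i]$ of full dimension $|Z_i|$ inside $I''/J''$.

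The main obstacle is preserving sdepth for the low slices of a straddling component, which at first glance have dimension $|Z_i| - 1$. I expect this to be resolved by the observation that on the low side the monomials of $I/J$ and $I''/J''$ literally coincide, so any witness to sdepth $d$ for the low part of $I/J$ transfers verbatim to $I''/J''$; thus we never actually need to start from a decomposition in which the straddling components cover the low part more efficiently than can be achieved in $I''/J''$. The reverse inequality follows by the symmetric construction using $\psi^{-1}$, with the extra slice of $I''/J''$ absorbed back into a straddling component of $I/J$, and the symmetry of the forward and reverse constructions is what ultimately yields the equality.
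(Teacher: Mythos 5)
The paper does not actually prove this statement: it cites it verbatim as \cite[Proposition 5.1]{IKF} and notes that it also follows from Okazaki--Yanagawa, then uses it as a black box. So there is no proof in the source to compare with, and your plan has to stand on its own. As it stands, it has several genuine gaps.

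First, the obstacle you flag is real and your proposed fix does not close it. Splitting a straddling component $u_iK[Z_i]$ into low slices $u_ix_n^jK[Z_i\setminus\{x_n\}]$ does drop the dimension to $|Z_i|-1$, and the remark that ``the low sides literally coincide'' does not help: you still have to hand a decomposition of $I''/J''$ in which these slices appear as summands, and you cannot in general replace the given optimal decomposition of $I/J$ by one without straddling components. The correct move is not to split at all. Because the $\deg_{x_n}=k$ layer of $I''\setminus J''$ is exactly $x_n$ times the $\deg_{x_n}=k-1$ layer of $I\setminus J$ (which you prove), one checks that $u_im\in I\setminus J$ for $m\in K[Z_i]$ already forces $u_im\in I''\setminus J''$; hence the same pair $(u_i,Z_i)$ gives a valid Stanley space inside $I''/J''$, of full dimension $|Z_i|$, and there is nothing to repair.

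Second, and this you do not notice, your translated pieces do not cover $I''/J''$. The $\deg_{x_n}=k$ layer of $I''/J''$ is $x_n$ times the \emph{whole} $\deg_{x_n}=k-1$ layer of $I/J$, and in the given decomposition that layer receives contributions not only from straddling components but also from components with $x_n\notin Z_i$ and $\deg_{x_n}(u_i)=k-1$. Your rule ``translate directly via $\psi$'' leaves those components at $\deg_{x_n}=k-1$ and never touches the corresponding portion of the extra layer, so that portion is uncovered. The fix is to add, for each such component, the extra summand $x_nu_iK[Z_i]$ (again of dimension $|Z_i|$), which lands inside the extra layer by your own characterization of it.

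Third, the reverse inequality is \emph{not} symmetric. Going from a decomposition of $I''/J''$ to one of $I/J$ requires \emph{deleting} a layer, not inserting one. A component $v_jK[W_j]$ of $I''/J''$ that lives entirely at $\deg_{x_n}=k$ (i.e.\ $x_n\notin W_j$ and $\deg_{x_n}(v_j)=k$) has empty image under $\psi^{-1}$; whether and how you can discard it and still obtain a partition of $I\setminus J$ depends on how the adjacent $\deg_{x_n}=k-1$ and $\deg_{x_n}=k+1$ layers over the same shadow are partitioned, and these need not line up with $v_jK[W_j]$. That direction needs a genuine argument of its own (this is exactly where the proof in \cite{IKF}, carried out in the Herzog--Vladoiu--Zheng interval-partition framework, does nontrivial regluing), and it does not fall out of the forward construction ``by symmetry.''
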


Inspired by this proposition we introduce a canonical form of a factor $\nicefrac{I}{J}$ of monomial ideals (see Definition \ref{def: canonical quot}) and we prove easily that sdepth is invariant  under taking the canonical form (see Theorem \ref{prop:sdepth}). This  leads us to the idea to study also the depth case (see Theorem \ref{prop:depth}). Theorem \ref{m} says that Stanley's Conjecture holds for a factor of monomial ideals if and only if it holds for its canonical form. As a side result, in the depth (respectively sdepth) computation algorithm for $\nicefrac{I}{J}$, one can first compute the canonical form and use the algorithm on this new much more simpler module (see the Appendix).

In Example \ref{ex: timings} we conclude that the $\depth$ and $\sdepth$ algorithms are faster when considering the canonical form: using $\textsc{CoCoA}$\cite{Co}, $\textsc{Singular}$\cite{Sing} and Rinaldo's $\sdepth$ computation algorithm \cite{Rina} we see a small decrease in the $\depth$ case timing, but in the $\sdepth$ case the run time is massively reduced. We hope that our algorithm together with the one from \cite{AP} will be used very often in problems concerning monomial ideals.

We owe thanks to Y.-H. Shen who noticed our results in a previous arXiv version and showed us the papers of Okazaki and Yanagawa \cite{OY} and \cite{Y}, because they are strongly connected with our topic. Indeed Proposition \ref{main} and Corollary \ref{prop:5.1 for depth } follow from \cite[Theorem 5.2]{OY} (see also \cite[Section 2,3]{OY}). However, our proofs of  Lemma \ref{lemma:depth is constant} and Corollary \ref{prop:5.1 for depth } are completely different from those appeared in the quoted papers and we  keep them for the sake of our completeness.
\vskip 1cm
\section{The canonical form of a factor of monomial ideals}
\vskip 1cm

Let $R=K[x_1,\ldots,x_{n-1}]$ be the polynomial $K$-algebra over a field $K$ and $S := R[x_n]$. Consider $J \subsetneq I\subset R$ two monomial ideals and denote by $G(I)$, respectively $G(J)$, the minimal (monomial) system of generators of $I$, respectively $J$.

\begin{definition} \label{def: canonical}
 {\em
 The power $x_n^r$ {\em enters in a monomial} $u$ if  $x_n^r|u$ but $x_n^{r+1}\nmid u$.

We say that $I$ is {\em of type} $(k_1,\ldots,k_s)$ {\em with respect to} $x_n$ if $x_n^{k_i}$ are all the powers of $x_n$ which enter in a monomial of $G(I)$ for $i\in [s]$ and $1\leq k_1<\ldots<k_s$.

$I$ is {\em in the canonical form with respect to} $x_n$ if $I$ is of type $(1,\ldots,s)$ for some $s\in {\mathbb  N}$.

We simply say that $I$ is {\em the canonical form} if it is in the canonical form with respect to all variables $x_1, \ldots, x_n$.

}
\end{definition}

\begin{remark} \label{rem:canonical form}{\em
Suppose that  $I$ is  of type $(k_1,\ldots,k_s)$ with respect to $x_n$. It is easy to get the {\em canonical form} $I'$ of $I$ {\em with respect to} $x_n$: replace $x_n^{k_i}$ by $x_n^i$ whenever $x_n^{k_i}$ enters in a generators of $G(I)$. Applying by recurrence this  procedure for other variables we get the {\em canonical form} of $I$, that is with respect to all variables. Note that a squarefree monomial ideal is of type $(1)$ with respect to each $x_i$ and it is in the canonical form with respect to $x_i$, so in this case $I'=I$.  }
\end{remark}

\begin{definition} \label{def: canonical quot} {\em
Let $J \subsetneq I \subset S$ two monomial ideals.
We say that $\nicefrac{I}{J}$ is {\em of type} $(k_1, \ldots, k_s)$ {\em with respect to } $x_n$ if $x_n^{k_i}$ are all the powers of $x_n$ which enter in a monomial of $G(I) \cup G(J)$ for $i \in [s]$ and $1 \leq k_1 < \ldots < k_s $.
All the terminology presented in Definition \ref{def: canonical} will extend automatically to the factor case. Thus we may speak about
the {\em canonical form}  $\overline{\nicefrac{I}{J}}$ of $\nicefrac{I}{J}$.
}
\end{definition}

\begin{remark} \label{rem:canonical quot}
{\em
In order to compute the canonical form with respect to $x_n$ of the $(k_1, \ldots, k_s)-$type factor $\nicefrac{I}{J}$, one will replace $x_n^{k_i}$ by $x_n^i$ whenever $x_n^{k_i}$ enters a generator of $G(I) \cup G(J)$.
}
\end{remark}

\begin{example}\label{ex:canonical}
{\em We present some examples where we compute the canonical form of a monomial ideal, respectively a factor of two monomial ideals. \\
\begin{enumerate}
\item Consider $S = \mathbb Q [x,y]$ and the monomial ideal $I = (x^4,x^3y^7)$. Then the canonical form of $I$ is $I' = (x^2, xy)$.

\item Consider $S = \mathbb Q [x,y,z]$, $I = (x^{10}y^5, x^4yz^7,z^7y^3)$ and \\ $J = (x^{10}y^{20}z^2, x^3y^4z^{13}, x^9y^2z^7)$.

The canonical form of $\nicefrac{I}{J}$ is $\overline{\nicefrac{I}{J}} = \fracs{(x^4y^5, x^2yz^2, y^3z^2)}{(x^4y^6z, xy^4z^3, x^3y^2z^2)}$.
\end{enumerate}}
\end{example}

The canonical form of a factor of monomial ideals $\nicefrac{I}{J}$ is not usually the factor of the canonical forms of $I$ and $J$ as shows the following example.

\begin{example}{\em
Let $S = \mathbb Q [x, y]$, $I = (x^4, y^{10}, x^2y^7)$ be
 and $J = (x^{20}, y^{30})$.
The canonical form of $I$ is $I' = (x^2, y^2, xy)$ and the canonical form of $J$ is $J' = (x,y)$. Then $J'\not\subset I'$.
But the canonical form of the factor $\nicefrac{I}{J}$ is $\overline{\nicefrac{I}{J}} = \fracs{(x^2,y^2,xy)}{(x^3, y^3)}$.
}
\end{example}

Using Proposition \ref{main}, we see that the Stanley depth of a monomial ideal does not change when considering its canonical form.

\begin{theorem}\label{prop:sdepth}
Let $I$, $J$ be monomial ideals in $S$ and $\overline{\nicefrac{I}{J}}$ the canonical form of $\nicefrac{I}{J}$. Then  $$\sdepth_S \nicefrac{I}{J}=\sdepth_S \overline{\nicefrac{I}{J}}.$$
\end{theorem}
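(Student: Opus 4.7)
The plan is to build $\nicefrac{I}{J}$ from its canonical form $\overline{\nicefrac{I}{J}}$ by a sequence of elementary transformations, each an instance of Proposition \ref{main}. Because canonicalizing with respect to one variable $x_i$ only alters $x_i$-degrees of the monomial generators and leaves the other degrees untouched, canonicality with respect to the other variables is preserved under such an operation. It will therefore suffice to treat canonicalization with respect to a single variable, which I take to be $x_n$, and iterate over all variables using the symmetric form of Proposition \ref{main}.

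Suppose $\nicefrac{I}{J}$ is of type $(k_1, \ldots, k_s)$ with respect to $x_n$, and call its canonical form with respect to $x_n$ simply $\nicefrac{I'}{J'}$, of type $(1, 2, \ldots, s)$. I would partition $G(I) \cup G(J)$ into classes $G_1, \ldots, G_s$ by $x_n$-degree: $G_i$ consists of those generators with $x_n$-degree $k_i$ in $\nicefrac{I}{J}$, equivalently with $x_n$-degree $i$ in $\nicefrac{I'}{J'}$. Setting $\epsilon_i := k_i - i$ and $\epsilon_0 := 0$, the strict monotonicity $1 \leq k_1 < k_2 < \cdots < k_s$ gives $0 \leq \epsilon_1 \leq \epsilon_2 \leq \cdots \leq \epsilon_s$.

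Starting from $\nicefrac{I'}{J'}$, I would perform $s$ successive stages. In stage $j$ I apply Proposition \ref{main} exactly $\epsilon_j - \epsilon_{j-1}$ times with parameter $k = k_{j-1} + 1$. A direct inspection of the current $x_n$-degrees at the start of stage $j$ shows that the classes $G_1, \ldots, G_{j-1}$ have already been brought to their target degrees $k_1, \ldots, k_{j-1}$, while $G_j, \ldots, G_s$ still sit at $j + \epsilon_{j-1}, \ldots, s + \epsilon_{j-1}$, so the choice $k = k_{j-1} + 1$ multiplies precisely the suffix $G_j, \ldots, G_s$ by $x_n$ at every application of this stage. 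After all $s$ stages a generator in $G_i$ has been multiplied by $x_n$ exactly $\sum_{j \leq i}(\epsilon_j - \epsilon_{j-1}) = \epsilon_i$ times, so its $x_n$-degree is $i + \epsilon_i = k_i$, and the resulting factor is literally $\nicefrac{I}{J}$. Since each application of Proposition \ref{main} preserves sdepth, this yields $\sdepth_S \nicefrac{I'}{J'} = \sdepth_S \nicefrac{I}{J}$, and iterating over all variables gives the theorem.

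The only delicate point is the bookkeeping that confirms the $x_n$-degrees of the classes remain strictly increasing throughout the construction, so that the chosen $k$ at each step shifts exactly the intended suffix of classes. I do not foresee any deeper obstacle, because the argument reduces to an explicit composition of the elementary shifts already provided by Proposition \ref{main}.
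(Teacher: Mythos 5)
Your proposal is correct, and it is essentially the paper's own argument in reverse: the paper reduces $\nicefrac{I}{J}$ to $\overline{\nicefrac{I}{J}}$ by repeatedly \emph{dividing} out an $x_n$ via Lemma~\ref{lemma: sdepth} (itself a rephrasing of Proposition~\ref{main}), whereas you build $\nicefrac{I}{J}$ up from $\overline{\nicefrac{I}{J}}$ by repeatedly \emph{multiplying} via Proposition~\ref{main}. The bookkeeping with $\epsilon_i = k_i - i$ checks out, and the observation that canonicalizing in one variable leaves the other variables' degrees untouched is exactly the justification both you and the paper need to iterate over the variables.
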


The proof goes applying  inductively the following lemma.

\begin{lemma}\label{lemma: sdepth}
Suppose that  $\nicefrac{I}{J}$ is  of type $(k_1,\ldots,k_s)$ with respect to $x_n$ and $k_j+1<k_{j+1}$ for some $0\leq j<s$ (we set $k_0=0$).
Let $G(I')$ (resp. $G(J')$) be the set of monomials  obtained from $G(I)$  (resp. $G(J)$) by substituting $x_n^{k_i} $ by $x_n^{k_i-1}$ for $i>j$ whenever $x_n^{k_i}$ enters in a monomial of $G(I)$ (resp. $G(J)$). Let $I'$ and $J'$ be the ideals generated by $G(I')$ and $G(J')$. Then $$\sdepth_S \nicefrac{I}{J}=\sdepth_S \nicefrac{I'}{J'}.$$
\end{lemma}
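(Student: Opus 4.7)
The plan is to reduce the lemma directly to Proposition \ref{main} by recognizing that the substitution defining $I'$, $J'$ is the inverse of the operation in that proposition for a suitable choice of $k$. Specifically, since the hypothesis $k_j+1<k_{j+1}$ gives $k_{j+1}-1>k_j$, I would apply Proposition \ref{main} to the pair $(I',J')$ with parameter $k:=k_{j+1}-1$ and check that the resulting ideals $I''$, $J''$ coincide with $I$, $J$.

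More concretely, I would first identify which monomials in $G(I')$ and $G(J')$ have $x_n$-degree $\geq k_{j+1}-1$. The $x_n$-powers appearing in $G(I)\cup G(J)$ are precisely $\{k_1,\dots,k_s\}$, so after substitution the $x_n$-powers appearing in $G(I')\cup G(J')$ lie in $\{k_1,\dots,k_j\}\cup\{k_{j+1}-1,\dots,k_s-1\}$ (plus possibly degree $0$). Since $k_i\leq k_j<k_{j+1}-1$ for $i\leq j$, a generator of $I'$ (resp.\ $J'$) has $x_n$-degree $\geq k_{j+1}-1$ if and only if it is the image of a generator of $I$ (resp.\ $J$) of the form $u\cdot x_n^{k_i}$ with $i>j$, substituted to $u\cdot x_n^{k_i-1}$. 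Multiplying such a generator by $x_n$ recovers $u\cdot x_n^{k_i}$; generators with $x_n$-degree $<k_{j+1}-1$ are left unchanged by Proposition \ref{main} and already coincide with generators of $I$, $J$. Hence $I''=I$ and $J''=J$, so Proposition \ref{main} yields $\sdepth_S \nicefrac{I'}{J'}=\sdepth_S\nicefrac{I''}{J''}=\sdepth_S\nicefrac{I}{J}$.

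The only technical point I would want to verify is that the substituted generators $G(I')$ actually form the \emph{minimal} monomial generating set of $I'$ (and similarly for $J'$), so that Proposition \ref{main}'s operation — which acts on the minimal generators — is being applied to the correct set. A short case check shows this: for two generators $u=m\cdot x_n^{k_i}$, $v=m'\cdot x_n^{k_{i'}}$ of $I$ with $i,i'>j$, divisibility $u'\mid v'$ forces $m\mid m'$ and $k_i\leq k_{i'}$, i.e.\ $u\mid v$, contradicting minimality; the mixed case where one index is $\leq j$ and the other is $>j$ uses the gap $k_j<k_{j+1}-1$ to show that divisibility among images is equivalent to divisibility among originals. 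No generator becomes redundant.

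The main (and essentially only) obstacle is therefore bookkeeping: making sure the set of $x_n$-powers in the modified ideals is described correctly and that the cut-off $k=k_{j+1}-1$ separates the modified from the unmodified generators cleanly. Everything else is an immediate invocation of Proposition \ref{main}.
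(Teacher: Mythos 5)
Your proof is correct, and it is arguably cleaner than what the paper offers. The paper dispatches this lemma with the one-line remark that it ``follows from the proof of'' \cite[Proposition 5.1]{IKF}, i.e.\ by re-running the argument of Ichim--Katth\"an--Moyano-Fern\'andez; you instead reduce it to the \emph{statement} of Proposition~\ref{main}, which is self-contained and does not require the reader to open the cited paper. The key observation --- that the substitution $x_n^{k_i}\mapsto x_n^{k_i-1}$ for $i>j$ is undone by the Proposition~\ref{main} operation with cut-off $k=k_{j+1}-1$, precisely because the gap $k_j<k_{j+1}-1$ cleanly separates the untouched generators (with $x_n$-degree at most $k_j$) from the shifted ones (with $x_n$-degree at least $k_{j+1}-1$) --- is exactly right. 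Your minimality check for $G(I')$ and $G(J')$ is also the correct technical point to settle, and your case analysis handles it: within each block divisibility of images is equivalent to divisibility of originals, and across the gap it is impossible in one direction and reduces to the original divisibility in the other. One could state the mixed case slightly more carefully (when the image with the \emph{larger} $x_n$-degree is the candidate divisor, the gap makes divisibility impossible outright rather than ``equivalent to the original''), but this does not affect the conclusion. Your approach buys explicitness and independence from the external proof; the paper's reference buys brevity.
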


The proof of Lemma \ref{lemma: sdepth} follows from the proof of \cite[Proposition 5.1]{IKF} (see here Proposition \ref{main}).

Next we focus on the $\depth \nicefrac{I}{J}$ and $\depth \overline{\nicefrac{I}{J}}$. The idea of the proof of the following lemma is taken from \cite[Section 2]{P}.

\begin{lemma}\label{lemma:depth is constant}
Let $I_0 \subset I_1  \subset \ldots \subset I_e \subset R$, $J\subset S$, $U_0 \subset U_1 \subset \ldots \subset U_e \subset R$, $V \subset S$ be some graded ideals of $S$, respectively $R$, such that $U_i \subset I_i$ for $0 \leq i \leq e$, $I_e \subset J$, $V \subset J$ and $U_e \subset V$. Consider $T_k = \Sum{i=0}{e}x_n^i I_i S + x_n^k J$ and $W_k = \Sum{i=0}{e}x_n^iU_iS + x_n^kV$ for $k > e$. Then $\depth_S \fracs{T_k}{W_k}$ is constant for all $k>e$.
\end{lemma}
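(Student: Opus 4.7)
The plan is to prove the stronger statement $\depth_S(T_{k+1}/W_{k+1}) = \depth_S(T_k/W_k)$ for each $k > e$; iterating gives the lemma. The key is multiplication by $x_n$: since $x_n^{i+1}I_iS \subset x_n^{i+1}I_{i+1}S$ for $i<e$ (by $I_i \subset I_{i+1}$), $x_n^{e+1}I_eS \subset x_n^eI_eS$, and $x_n\cdot x_n^kJ = x_n^{k+1}J$, one checks $x_nT_k \subset T_{k+1}$ and, by the same argument, $x_nW_k \subset W_{k+1}$. This yields a natural $S$-linear map
\begin{equation*}
\phi_k \colon T_k/W_k \to T_{k+1}/W_{k+1},\qquad f + W_k \mapsto x_nf + W_{k+1}.
\end{equation*}

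Next I would decompose $T_k$ and $W_k$ by $x_n$-degree. Writing any monomial ideal $N \subset S$ as an $R$-module $N = \bigoplus_i x_n^i N^{(i)}$ with $N^{(0)} \subset N^{(1)} \subset \cdots$, the $x_n^j$-piece of $T_k$ equals $I_j$ for $j\leq e$, $I_e$ for $e<j<k$, and $J^{(j-k)}$ for $j\geq k$; analogously for $W_k$. A piece-by-piece computation then shows that both $\Ker\phi_k$ and $\Coker\phi_k$ are annihilated by $x_n$ and, crucially, \emph{independent of $k$}:
\begin{equation*}
\Ker\phi_k \cong \bigoplus_{j=0}^{e-1} x_n^j(I_j \cap U_{j+1})/U_j,\qquad \Coker\phi_k \cong \bigoplus_{j=0}^{e} x_n^j\,I_j/(I_{j-1} + U_j),
\end{equation*}
with $I_{-1}:=0$. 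Denote these $R$-modules by $K$ and $C$; since they are $x_n$-annihilated, $\depth_S K = \depth_R K$ and $\depth_S C = \depth_R C$.

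Factoring $\phi_k$ through its image $N_k := \Im\phi_k$ produces two short exact sequences whose end terms do not depend on $k$:
\begin{equation*}
0 \to K \to T_k/W_k \to N_k \to 0,\qquad 0 \to N_k \to T_{k+1}/W_{k+1} \to C \to 0.
\end{equation*}
Applying the depth lemma to these sequences (and, as needed, to their analogues for consecutive $k$, which furnish a chain $\cdots \to N_{k-1}\hookrightarrow T_k/W_k\twoheadrightarrow N_k \hookrightarrow T_{k+1}/W_{k+1} \to \cdots$) threads the depths $\depth(T_k/W_k)$ and $\depth(T_{k+1}/W_{k+1})$ together through the common $N_k$ and the $k$-invariant modules $K$ and $C$; a case analysis on the relative sizes of $\depth K$, $\depth C$ and $\depth N_k$ then forces them to coincide.

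The main obstacle is precisely this last case analysis: the depth lemma produces only the standard inequalities $\depth B \geq \min(\depth A, \depth C)$ and its companions, so to extract an equality one must exploit the explicit monomial form of $K$ and $C$---in particular the quotients $I_j/(I_{j-1}+U_j)$ and $(I_j \cap U_{j+1})/U_j$ encoding the chain $I_0 \subset \cdots \subset I_e$---and dispatch the borderline cases separately. This is the kind of delicate monomial analysis carried out in \cite[Section 2]{P}, on which the author's argument is modeled.
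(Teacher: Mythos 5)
Your setup is sound and genuinely different from the paper's: you use the multiplication--by--$x_n$ map $\phi_k\colon T_k/W_k \to T_{k+1}/W_{k+1}$, whereas the paper instead builds the short exact sequence $0 \to J/V \xto{\cdot x_n^k} T_k/W_k \to H_k \to 0$ and argues from the two end terms (with $\depth H_k = \min_i\depth_R I_i/U_i$). Your computations of $\Ker\phi_k$ and $\Coker\phi_k$ via the $x_n$-graded slices $T_k^{(j)}$, $W_k^{(j)}$ are correct, and both are indeed $x_n$-annihilated $R$-modules independent of $k$.

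However, the final step is a genuine gap, not merely an omitted routine verification. You assert that ``a case analysis on the relative sizes of $\depth K$, $\depth C$ and $\depth N_k$ then forces them to coincide,'' but this is false as stated. The Depth Lemma applied to $0\to K\to T_k/W_k\to N_k\to 0$ and $0\to N_k\to T_{k+1}/W_{k+1}\to C\to 0$ yields only inequalities, and in the boundary configurations (e.g.\ $\depth N_k=\depth C+1$, where the long exact sequence of $\Ext(K,-)$ can cancel) the depth of the middle term is not determined by the end terms. Worse, the linking module $N_k=\Im\phi_k$ is itself a quotient $T_k/\tilde W_k$ of exactly the same shape as $T_k/W_k$ (with $\tilde U_j := I_j\cap U_{j+1}$), so showing $\depth N_k$ is $k$-independent is logically equivalent to the lemma being proved --- the argument is circular without a further induction. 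The paper resolves the analogous difficulty with real work: in the case $t=0$ it brings in the local cohomology submodules $H^0_m(I_i/U_i)$ and constructs an auxiliary exact sequence; in the case $t>0$ it runs an induction on $t$ after killing a common regular element. Something of comparable substance is missing from your argument; as written it sets up the right objects but does not prove the statement.
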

\begin{proof}

Consider the following linear subspaces of $S$: $I := \Sum{i=0}{e}x_n^i I_i$ and $U := \Sum{i=0}{e}x_n^i U_i$. Note that $I$ and $U$ are not ideals in $S$.

If $I = U$, then the claim follows easily from the next chain of isomorphisms
$\fracs{T_k}{W_k} \iso \fracs{x_n^kJ}{x_n^kJ \cap (I+x_n^kV)S} \iso \fracs{x_n^kJ}{x_n^k(I+V)S} \iso \fracs{J}{(I+V)S}$ for all $k>e$, and hence $\depth_S \fracs{T_k}{W_k}$ is constant for all $k>e$.

Assume now that $I \neq U$ and consider the following exact sequence
$$0 \to \fracs{J}{V} \xto{\cdot x_n^k} \fracs{T_k}{W_k} \to \fracs{T_k}{W_k + x_n^kJ} \to 0,$$ where the last term we denote by $H_k$. Note that $H_k \iso  \fracs{IS}{IS \cap (U+x_n^kJ)S}$ and  $IS \cap (U + x_n^kJ)S = US + x_n^kIS$. Since $x_n^kH_k=0$, $H_k$ is a $\nicefrac{S}{(x_n^k)}-$module. Then $\depth_S H_k = \depth_{\nicefrac{S}{(x_n^k)}}H_k=\depth_R H_k$ because the graded maximal ideal $m$ of $R$ generates a zero dimensional ideal in $\nicefrac{S}{(x_n^k)}$. But $H_k$ over $R$ is isomorphic with $\fracs{\oplus_{i=0}^{k-1} I_i}{\oplus_{i=0}^{k-1} U_i} \iso \bigoplus_{i=0}^{k-1} \fracs{I_i}{U_i}$, where $I_i=I_e$ and $U_i=U_e$ for $e<i<k$. It follows that $t:=\depth_S H_k = \min_i\left\{\depth_{R} \fracs{I_i}{U_i}\right\}$.

If $\depth_S \fracs{J}{V} = 0$, then the Depth Lemma gives us $\depth_S \fracs{T_k}{W_k} = t = 0$ for all $k>e$ and hence we are done. Therefore we may suppose that $\depth_S \fracs{J}{V} > 0$. Note that $t>0$ implies $\depth_S \fracs{T_k}{W_k} > 0$ by the Depth Lemma since otherwise $\depth_S \fracs{T_k}{W_k} = \depth_{S} \fracs{J}{V} = 0$, which is false. Next we will split the proof in two cases.

$\circ$ Case $t = 0$.

Let ${\mathcal F}=\big\{i\in \{0,\ldots,e\}\ \big|\ \depth_R\nicefrac{I_i}{U_i}=0\big\}$ and $L_i\subset I_i$ be the graded ideal containing $U_i$ such that $\nicefrac{L_i}{U_i} \iso H_m^0(\nicefrac{I_{i}}{U_{i}})$.

If $i\in {\mathcal F}$ and there exists $u\in ( L\cap V)\setminus U_i$ then $(m^s,x_n^k)x_n^i u\subset W_k$ for some $s\in {\mathbb N}$, that is $\depth_S \fracs{T_k}{W_k} = 0$ for all $k > e$.

Now consider the case when $L_i\cap V=U_i$ for all $i\in {\mathcal F}$. If $i\in {\mathcal F}$ then  note that $L_i\subset L_j$  for $i< j\leq e$.
Set $V'=V+L_eS$, $U'=U+ \Sum{i\in {\mathcal F}}{}x_n^i L_i$ and $W'_k := U'S+x_n^kV'=U'S+x_n^kV$ because $x_n^kL_eS\subset U'S$. Consider the following exact sequence $$0 \to \fracs{W'_k}{W_k} \to \fracs{T_k}{W_k} \to \fracs{T_k}{W_k'} \to 0.$$ For the last term we have $H_m^0(\nicefrac{I_j}{U'_j})=0$, $0\leq j\leq e$ and so the new $t>0$, which is our next case. Thus we get $\depth_S \fracs{T_k}{W'_k}>0$ is constant for $k>e$. The first term is isomorphic  to $\fracs{U'S}{U'S\cap W_k}$. But $U'S\cap W_k=US+(U'S\cap x_n^kV)$ since $US\subset U'S$. Since $U'S\cap (x_n^kS)=x_n^k (U_e+L_e)S$ and $U_e\subset V$ it follows that $U'S\cap x_n^kV=x_n^kUS+(x_n^kL_eS\cap x_n^kVS)=x_n^kUS$.
Consequently, the first term from the above exact sequence is isomorphic with $\fracs{U'S}{US}$. Note that the annihilator  of the element induced by some $u\in L_e\setminus V$ in $\nicefrac{U'S}{US}$ contains a power of $m$ and so $\depth_S \fracs{U'S}{US}\leq 1$. The inequality is equality since $x_n$  is regular on $\nicefrac{U'S}{US}$. By the Depth Lemma we get $\depth_S \fracs{T_k}{W_k}=1$ for all $k>e$.

$\circ$ Case $t>0$.

If $\depth_{R} \fracs{J}{V}\leq t= \depth_S H_k$ then the Depth Lemma gives us again the claim, i.e. $\depth_S \fracs{T_k}{W_k} = \depth_S \fracs{J}{V}$ for all $k>e$.

Assume that $\depth_{S} \fracs{J}{V} >t$. Apply induction on $t$, the initial step $t = 0$ being done in the first case. Suppose that $t>0$. Then $\depth_S \fracs{J}{V}>t>0$ implies that $\depth_S \fracs{J}{V}\geq 2$ and so
 we may find a homogeneous polynomial $f \in m$ that is regular on $\fracs{J}{V}$. Moreover we may find $f$ to be regular also   on all $\fracs{I_i}{U_i}$, $i \leq e$. Then $f$ is regular on $\fracs{T_k}{W_k}$. Set $V'' := V + f J$ and $U''_i := U_i + f I_i$ for all $i \leq e$ and set $W''_k := \Sum{i=0}{e}x_n^iU''_iS + x_n^kV''$. By Nakayama's Lemma we get $U'' \neq U$, and therefore $\depth_{R} \fracs{I}{U''} = t-1$ and by induction hypothesis it results that $\depth_S \fracs{T_k}{W_k} = 1 + \depth_S \fracs{T_k}{W''_k} = $ constant for all $k > e$.

 Finally, note that we may pass from the first case to the second one and conversely. In this way $U$ increases at each step. By Noetherianity at last we may arrive in finite steps to the case $I=U$, which was solved at the beginning.
 \hfill\
\end{proof}

The next corollary is in fact \cite[Proposition 5.1]{IKF} (see Proposition \ref{main}) for depth. It follows easily from Lemma \ref{lemma:depth is constant} but also from \cite[Proposition 5.2]{OY} (see also \cite[Sections 2, 3]{Y}.

\begin{corollary}\label{prop:5.1 for depth }
Let $e \in \mathbb N$, $I$ and $J$ monomial ideals in $S := K[x_1, \ldots, x_n]$. Consider $I'$ and $J'$ be the monomial ideals obtained from $I$ and $J$ in the following way: each generator whose degree in $x_n$ $\geq e$ is multiplied by $x_n$ and all the other generators are left unchanged. Then
$$\depth_S \nicefrac{I}{J} = \depth_S \nicefrac{I'}{J'}.$$
\end{corollary}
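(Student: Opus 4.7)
The plan is to realize $\nicefrac{I}{J}$ and $\nicefrac{I'}{J'}$ as $\nicefrac{T_k}{W_k}$ for two values $k\in\{e,e+1\}$ built from the \emph{same} data in Lemma~\ref{lemma:depth is constant}, so that the constancy statement of that Lemma delivers the desired equality. The corner case $e=0$ is handled separately: there $I'=x_nI$, $J'=x_nJ$, and multiplication by $x_n$ (injective since $S$ is a domain) provides a degree-shifted graded isomorphism $\nicefrac{I}{J}\cong\nicefrac{I'}{J'}$, so depths coincide. Assume $e\geq 1$ in what follows.

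Put $R=K[x_1,\ldots,x_{n-1}]$ and, for any monomial ideal $\mathcal{K}\subset S$, set $\widetilde{\mathcal{K}}_i:=\{f\in R:fx_n^i\in\mathcal{K}\}\subset R$, an ascending chain in $i$. Let $I_{<e}$, $I_{\geq e}\subset S$ be the ideals generated by those $g\in G(I)$ with $\deg_{x_n}(g)<e$ and $\geq e$ respectively, and similarly $J_{<e}$, $J_{\geq e}$. I would invoke Lemma~\ref{lemma:depth is constant} with its parameter $e_{\mathrm{L}}:=e-1$, with its ideals $I_i:=\widetilde{I}_i$ and $U_i:=\widetilde{J}_i$ for $0\leq i\leq e-1$, and with its $J$ and $V$ taken to be
\[
\mathbf{J}:=\widetilde{I}_{e-1}S+\bigl(g/x_n^e\; :\; g\in G(I),\ \deg_{x_n}(g)\geq e\bigr)S,
\]
\[
\mathbf{V}:=\widetilde{J}_{e-1}S+\bigl(g/x_n^e\; :\; g\in G(J),\ \deg_{x_n}(g)\geq e\bigr)S.
\]

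Most hypotheses of the Lemma are clear from the construction; the only one requiring a moment's thought is $\mathbf{V}\subset\mathbf{J}$, which follows by fixing $g\in G(J)$ with $\deg_{x_n}(g)\geq e$, choosing $g'\in G(I)$ dividing $g$, and splitting on whether $\deg_{x_n}(g')\geq e$ or $<e$. A short computation then identifies $\sum_{i=0}^{e-1}x_n^i\widetilde{I}_iS=I_{<e}$ and shows that for $k\geq e$ the extra term $x_n^k\widetilde{I}_{e-1}S$ is absorbed into $I_{<e}$ (its generators are of the form $x_n^{k-\deg_{x_n}(g')}g'$ with $k-\deg_{x_n}(g')\geq 1$). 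This yields $T_k=I_{<e}+x_n^{k-e}I_{\geq e}$ and analogously $W_k=J_{<e}+x_n^{k-e}J_{\geq e}$, so $T_e=I$, $W_e=J$, $T_{e+1}=I'$, $W_{e+1}=J'$, and Lemma~\ref{lemma:depth is constant} delivers $\depth_S\nicefrac{I}{J}=\depth_S\nicefrac{I'}{J'}$. The main obstacle is really the design of $\mathbf{J}$ and $\mathbf{V}$: the Lemma's hypothesis $I_{e_{\mathrm{L}}}\subset J$ forces one to pad the naive ``reduced tails'' by $\widetilde{I}_{e-1}S$ and $\widetilde{J}_{e-1}S$, yet one needs precisely the absorption property above to ensure that $T_e=I$ and $T_{e+1}=I'$ are undisturbed by this padding.
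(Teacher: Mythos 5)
Your proof is correct and is essentially the argument the paper leaves implicit when it asserts that the Corollary ``follows easily'' from Lemma~\ref{lemma:depth is constant}: one realizes $\nicefrac{I}{J}$ and $\nicefrac{I'}{J'}$ as $\nicefrac{T_e}{W_e}$ and $\nicefrac{T_{e+1}}{W_{e+1}}$ built from the same input data with the Lemma's index parameter set to $e-1$. The care you take in designing $\mathbf{J}$ and $\mathbf{V}$ (padding by $\widetilde{I}_{e-1}S$, $\widetilde{J}_{e-1}S$ so that $I_{e-1}\subset\mathbf{J}$ and $U_{e-1}\subset\mathbf{V}$ hold while the padding is absorbed into $I_{<e}$ and $J_{<e}$ for $k\geq e$), the verification $\mathbf{V}\subset\mathbf{J}$ via a dividing generator of $I$, and the separate $e=0$ case are exactly what is needed to make that ``easy'' deduction rigorous.
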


This leads us to the equivalent result of Theorem \ref{prop:sdepth} for depth.

\begin{theorem} \label{prop:depth}
Let $I$ and $J$ be two monomial ideals in $S$ and $\overline{\nicefrac{I}{J}}$ the canonical form of $\nicefrac{I}{J}$. Then $$\depth_S \nicefrac{I}{J} = \depth_S \overline{\nicefrac{I}{J}}.$$
\end{theorem}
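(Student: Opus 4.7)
The strategy is to mimic the proof of Theorem \ref{prop:sdepth}, substituting Corollary \ref{prop:5.1 for depth } for Proposition \ref{main}. First I would establish a depth analog of Lemma \ref{lemma: sdepth}: suppose $\nicefrac{I}{J}$ is of type $(k_1,\ldots,k_s)$ with respect to $x_n$ and $k_j+1<k_{j+1}$ for some $0\leq j<s$ (with $k_0=0$); let $I'$, $J'$ be the ideals obtained from $I$, $J$ by substituting $x_n^{k_i}$ by $x_n^{k_i-1}$ for every $i>j$ in the minimal generators. The claim is that $\depth_S\nicefrac{I}{J}=\depth_S\nicefrac{I'}{J'}$.

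To see this, I would apply Corollary \ref{prop:5.1 for depth } to the factor $\nicefrac{I'}{J'}$ with the choice $e:=k_{j+1}-1$. The $x_n$-powers appearing in $G(I')\cup G(J')$ are $k_1,\ldots,k_j,k_{j+1}-1,\ldots,k_s-1$; multiplying by $x_n$ every generator whose $x_n$-degree is at least $e=k_{j+1}-1$ leaves the low generators (those with $x_n$-power in $\{k_1,\ldots,k_j\}$) untouched and increases each high generator's $x_n$-power by one, producing precisely $I$ and $J$. The corollary then yields $\depth_S\nicefrac{I'}{J'}=\depth_S\nicefrac{I}{J}$, which is the auxiliary statement.

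With the auxiliary lemma in hand, the proof of Theorem \ref{prop:depth} is a straightforward induction. If $\nicefrac{I}{J}$ is not in canonical form with respect to $x_n$, pick the smallest $j$ such that $k_{j+1}>k_j+1$ and apply the lemma to strictly reduce the number of gaps; iterating brings $\nicefrac{I}{J}$ to canonical form with respect to $x_n$ without changing its depth. Repeating the procedure with $x_{n-1}, x_{n-2}, \ldots, x_1$ completes the argument: since each such transformation only alters exponents of a single variable, it does not reintroduce gaps for variables already placed in canonical form. After $n$ rounds one obtains $\overline{\nicefrac{I}{J}}$ with $\depth_S \overline{\nicefrac{I}{J}} = \depth_S \nicefrac{I}{J}$.

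The main subtlety — really the only one — is a matter of direction: Corollary \ref{prop:5.1 for depth } is phrased as \emph{increasing} the $x_n$-degree of the high generators, which at first sight widens gaps rather than closing them. Since the corollary is an equality of depths, however, it may be read in either direction, and reading it backwards is exactly what is needed to reduce the type $(k_1,\ldots,k_j,k_{j+1},\ldots,k_s)$ to $(k_1,\ldots,k_j,k_{j+1}-1,\ldots,k_s-1)$. Once this observation is made, no further input from Lemma \ref{lemma:depth is constant} is required.
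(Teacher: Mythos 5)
Your proposal is correct and takes essentially the same route as the paper: both proofs rest on iterated applications of Corollary \ref{prop:5.1 for depth }, one variable at a time. The only difference is presentational — the paper starts from $\overline{\nicefrac{I}{J}}$ and opens the gaps back up to recover $\nicefrac{I}{J}$, while you start from $\nicefrac{I}{J}$ and close the gaps, but in both cases the corollary is invoked to pass from the factor with the smaller $x_n$-exponents to the one with the larger exponents, so the arguments coincide.
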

\begin{proof}
Assume that $\nicefrac{I}{J}$ is of type $(k_1,\ldots,k_s)$ with respect to $x_n$ and obviously $\overline{\nicefrac{I}{J}}$ is of type $(1,2,\ldots,s)$ with respect to $x_n$. Starting with $\overline{\nicefrac{I}{J}}$, we apply Corollary \ref{prop:5.1 for depth } till we obtain an $\nicefrac{I'_1}{J'_1}$ of type $(k_1,k_1+1,\ldots,k_1+s-1)$ having the same depth as $\overline{\nicefrac{I}{J}}$. We repeat the process until we get $\nicefrac{I'_s}{J'_s}$ of type $(k_1, k_2, \ldots, k_s)$ with respect to $x_n$ with the unchanged depth. Now we iterate and take the next variable. At the very end the claim will follow.\hfill \
\end{proof}

Theorem \ref{prop:sdepth} and Theorem \ref{prop:depth} give us the following theorem

\begin{theorem}\label{m}
The Stanley conjecture holds for a factor of monomial ideals $\nicefrac{I}{J}$ if and only if it holds for its canonical form $\overline{\nicefrac{I}{J}}$.
\end{theorem}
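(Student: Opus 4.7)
The plan is to derive this theorem directly from the two preceding results, Theorem \ref{prop:sdepth} and Theorem \ref{prop:depth}, with essentially no extra work. Stanley's conjecture applied to a module $M$ is nothing more than the single numerical inequality $\sdepth_S M \geq \depth_S M$, so it suffices to show that both sides are preserved under passage to the canonical form.

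First I would recall the two invariance statements already established: Theorem \ref{prop:sdepth} gives $\sdepth_S \nicefrac{I}{J} = \sdepth_S \overline{\nicefrac{I}{J}}$, and Theorem \ref{prop:depth} gives $\depth_S \nicefrac{I}{J} = \depth_S \overline{\nicefrac{I}{J}}$. Then I would simply observe that
\[
\sdepth_S \nicefrac{I}{J} \geq \depth_S \nicefrac{I}{J} \quad \iff \quad \sdepth_S \overline{\nicefrac{I}{J}} \geq \depth_S \overline{\nicefrac{I}{J}},
\]
which is the equivalence we want.

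There is no genuine obstacle here; the work has all been done in establishing the two invariance theorems. The only thing worth being careful about is making sure that the equalities are understood in the strong sense (as integers in $\mathbb N \cup \{\infty\}$, with the same conventions for the zero module if relevant), so that substituting one side for the other in the inequality is legitimate. Given that both earlier theorems are stated as plain equalities of depths (respectively Stanley depths) computed over the same ring $S$, this is immediate, and the proof reduces to a one-line chain of equivalences.
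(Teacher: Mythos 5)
Your argument is exactly the paper's: Theorem \ref{m} is stated as an immediate consequence of Theorem \ref{prop:sdepth} and Theorem \ref{prop:depth}, with the inequality $\sdepth \geq \depth$ preserved on both sides under passage to the canonical form. Your write-up just makes the one-line deduction explicit; nothing is missing.
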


Using  Theorem \ref{prop:depth}, instead of computing the $\depth$ or the $\sdepth$ of $\nicefrac{I}{J}$, $J \subsetneq I \subset S$, we can compute it for the simpler module $\overline{\nicefrac{I}{J}}$.

\begin{example} \label{ex: timings}
{\em We present the different timings for the depth and sdepth computation algorithms with and without extracting the canonical form. $\textsc{Singular}$\cite{Sing} was used in the depth computations while $\textsc{CoCoA}$ \cite{Co} and Rinaldo's paper\cite{Rina} were used for the Stanley depth computation.
\begin{enumerate}
\item Consider the ideals from Example \ref{ex:canonical}(2).

Timing for $\sdepth \nicefrac{I}{J}$  computation: 22s.

Timing for $\sdepth \overline{\nicefrac{I}{J}}$ computation: 74 ms.

\item Consider $R = \mathbb Q[x,y,z]$ and $I = (x^{100}yz,x^{50}yz^{50},x^{50}y^{50}z)$. Then the canonical form is $I' = (x^2yz,xyz^2,xy^2z)$.

Timing for $\sdepth I$  computation: 13m 3s.

Timing for $\sdepth I'$ computation: 21 ms.

Notice that the difference in timings is very large. Therefore using the canonical form in the $\sdepth$ computation is a very important optimization step. On the other side, the $\depth$ computation is immediate in both cases. In the last example, the timing difference can be seen.

\item Consider $R = \mathbb{Q}[x,y,z,t,v,a_1,\ldots,a_5]$,\\ $I = (v^4x^{12}z^{73},v^{87}t^{21}y^{13},x^{43}y^{18}z^{72}t^{28},vxy,vyz,vzt,vtx,a_1^{7000}, a_2^{413};)$, \\$J = (v^5x^{13}z^{74},v^{88}t^{22}y^{14},x^{44}y^{19}z^{73}t^{29},v^2x^2y^2,v^2y^2z^2,v^2z^2t^2,v^2t^2x^2)$.

Timing for $\depth \nicefrac{I}{J}$ computation: 16m 11s.

Timing for $\depth \overline{\nicefrac{I}{J}}$ computation: 11m.

\end{enumerate}
}
\end{example}

\vskip 1cm
\section{Appendix}
\vskip 1cm
We sketch the simple idea of the algorithm which computes the canonical form of a monomial ideal $I$. This can easily be extended to compute the canonical form of $\nicefrac{I}{J}$ by simple applying it for $G(I) \cup G(J)$ and afterwards extracting the generators corresponding to $I$ and $J$. This was used in Example \ref{ex: timings}.

The algorithm is based on Remark \ref{rem:canonical quot}: for each variable $x_i$ we build the list \verb"gp" in which we save the pair $(g,p)$, were $p$ is chosen such that $x_i^p$ enters the $g-$generator of the monomial ideal $I$. This list will be sorted by the powers $p$ as in the following example

\begin{example}
{\em Consider the ideal $I := (x^{13}, x^{4}y^{7}, y^7z^{10}) \subset \mathbb{Q} [x, y, z]$. Then for each variable we will obtain a different \verb"gp" as shown below:
\begin{itemize}

\item[$\circ$] For the first variable $x$, \verb"gp" is equal to \begin{tabular}{ |c | c | c |c| }
  \hline
  2 & 4 & 1 & 13 \\
  \hline
\end{tabular}. Therefore $I$ is of type $(4,13)$ with respect to $x$. Hence, in order to obtain the canonical form with respect to $x$, one has to divide the second generator by $x^{4-1} = x^3$ and the first generator by $x^{13-2} = x^{11}$. After these computation we will get $I_1 = (x^2, xy^7, y^7z^{10})$. Note that $I_1$ is in the canonical form w.r.t. $x$.

\item[$\circ$] For the second variable $y$, \verb"gp" is equal to \begin{tabular}{ |c | c | c |c| }
  \hline
  3 & 7 & 2 & 7 \\
  \hline
\end{tabular}. Similar as above, one has to divide the second and the third generator by $y^6$, and hence it results $I_2 = (x^2, xy, yz^{10})$. Again, $I_2$ is in the canonical form w.r.t. $y$ and $x$.

\item[$\circ$] For the last variable $z$, \verb"gp" is equal to   \begin{tabular}{ |c | c | }
  \hline
  3 & 10 \\
  \hline
\end{tabular}. We divide the third generator of $I_2$ by $z^9$ and we get our final result $I' = (x^2, xy, yz)$., which is in the canonical form with respect to all variables.
\end{itemize}
}
\end{example}

Based on the above idea, we construct two procedures: \verb"putIn" and \verb"canonical" $-$ the first one constructing the list \verb"gp", and the second one computing the canonical form of a monomial ideal. The proof of correctness and termination is trivial. The procedures were written in the $\textsc{Singular}$ language.
\begin{verbatim}
proc putIn(intvec v, int power, int nrgen)
{
    if(size(v) == 1)
    {
        v[1] = nrgen;
        v[2] = power;
        return(v);
    }
    int i,j;
    if(power <= v[2])
    {
        for(j = size(v)+2; j >=3; j--)
        {
            v[j] = v[j-2];
        }
        v[1] = nrgen;
        v[2] = power;
        return(v);
    }
    if(power >= v[size(v)])
    {
        v[size(v)+1] = nrgen;
        v[size(v)+1] = power;
        return(v);
    }
    for(j = size(v) + 2; (j>=4) && (power < v[j-2]); j = j-2)
    {
        v[j] = v[j-2];
        v[j-1] = v[j-3];
    }
    v[j] = power;
    v[j-1] = nrgen;
    return(v);
}

proc canonical(ideal I)
{
    int i,j,k;
    intvec gp;
    ideal m;
    intvec v;
    v = 0:nvars(basering);
    for(i = 1; i<=nvars(basering); i++)
    {
        gp = 0;
        v[i] = 1;
        for(j = 1; j<=size(I); j++)
        {
            if(deg(I[j],v) >= 1)
            {
                gp = putIn(gp,deg(I[j],v),j);
            }
        }
        k = 0;
        if(size(gp) == 2)
        {
            I[gp[1]] = I[gp[1]]/(var(i)^(gp[2]-1));
        }
        else
        {
            for(j = 1; j<=size(gp)-2;)
            {
                k++;
                I[gp[j]] = I[gp[j]]/(var(i)^(gp[j+1]-k));
                j = j+2;
                while((j<=size(gp)-2) && (gp[j-1] == gp[j+1]) )
                {
                    I[gp[j]] = I[gp[j]]/(var(i)^(gp[j+1]-k));
                    j = j + 2;
                }
            }
            if(j == size(gp)-1)
            {
                if(gp[j-1] == gp[j+1])
                {
                    I[gp[j]] = I[gp[j]]/(var(i)^(gp[j+1]-k));
                }
                else
                {
                    k++;
                    I[gp[j]] = I[gp[j]]/(var(i)^(gp[j+1]-k));
                }
            }
        }
        v[i] = 0;
    }
    return(I);
}
\end{verbatim}

\end{document}